\definecolor{light}{gray}{0.9}
\definecolor{medium}{gray}{0.8}
\newtheorem{theorem}{Theorem}
\newtheorem{lemma}[theorem]{Lemma}
\newtheorem{corollary}[theorem]{Corollary}
\newtheorem{proposition}[theorem]{Proposition}
\theoremstyle{definition}
\newtheorem{remark}[theorem]{Remark}
\numberwithin{equation}{section}
\def\ZZ{{\mathbb Z}}
\def\RR{{\mathbb R}}
\def\QQ{{\mathbb Q}}
\def\11{{\mathbb 1}}
\def\cS{{\mathscr S}}
\def\cH{{\mathscr H}}
\def\cC{{\mathscr C}}
\def\Hilb{\operatorname{Hilb}}
\def\cone{\operatorname{cone}}
\def\Aut{\operatorname{Aut}}
\def\rec{\operatorname{rec}}
\def\iso{\cong}
\def\dirsum{\oplus}
\def\mon{\operatorname{mon}}
\def\type{\operatorname{type}}
\def\etype{\operatorname{etype}}
\let\epsilon=\varepsilon
\let\tilde=\widetilde
\begin{document}
	

\title{Automorphism groups and normal forms in Normaliz}

\author{Winfried Bruns}
\address{Universität Osnabrück, Institut für Mathematik, 49069 Osnabrück, Germany}
\email{wbruns@uos.de}
\dedicatory{For Jürgen on his 80th birthday}
\date{}

\subjclass[2010]{52C15, 20B25, 52B55}
\keywords{automorphism group, isomorphism class, cone, polytope, monoid}

\begin{abstract}
We discuss the computation of automorphism groups and normal forms of cones and polyhedra in Normaliz, and indicate its implementation via nauty. The types of automorphisms include integral, rational, Euclidean and combinatorial, as well as algebraic for polytopes defined over real algebraic number fields. Examples treated in detail are the icosahedron and linear ordering polytopes whose Euclidean automorphism groups are determined.
\end{abstract}

\maketitle

\section{Introduction}

In this note we discuss the computation of various types of automorphism groups of cones and polyhedra in the software package Normaliz \cite{Nmz}. Automorphism groups are a classical theme, especially for regular polytopes $P$ and polytopes derived from them. For this class one wants to understand the groups of rigid motions that map $P$ to itself. Since the extension of Normaliz to real algebraic number fields, regular polytopes can be defined, and their Euclidean automorphisms are computable.

For rational polytopes and cones we are mainly interested in automorphisms defined over $\ZZ$. We have computed the integral automorphism groups for polytopes in \cite{ICP} and used isomorphism types in the experiments of the project \cite{ToricExp}.	While these computations were based on our own routines, Normaliz now uses nauty \cite{nauty} as its engine for the computation of automorphism groups, raising the level of computability and saving computation time by several orders of magnitude. In addition to our own approach, Normaliz also uses the method introduced by Bremner et al. \cite{Bremner} for the computation of automorphism groups. 

Automorphism groups are not only interesting for their own sake: they can be exploited in the computation of distinguished lattice points like  the Hilbert basis, or enumerative invariants, for example the (lattice normalized) volume. A special version of Normaliz has been used to check Wilf's conjecture for numerical semigroups of multiplicities $\le 18$; see Brunset al.\ \cite{Wilf}. This computation would have been impossible without the exploitation of the group of integral automorphisms of the so-called Kunz polyhedra. 

One of the Normaliz algorithms for polytope volumes uses a descent in the face lattice; see Bruns and Ichim \cite{Descent}. In version 3.9.0 it has now been refined by the identification of isomorphic faces. This requires the computation of integral isomorphism types. Further applications of integral automorphisms and isomorphism types are in preparation.

All computations of automorphism groups and isomorphism types must be reduced to the computation of permutations of finitely many vectors that preserve symmetric bilinear forms defined on the ambient vector space or the natural evaluation of linear forms on vectors. Therefore one needs a distinguished set of vectors and/or linear forms that are permuted by the automorphisms under consideration. These are not always available, or perhaps only after an extension of the field of definition. We will explain this obstruction when it comes up.

One consequence of the necessity to work with a finite set of vectors or linear forms is that we must pass from a non-pointed cone to its pointed quotient and from a polyhedron to its quotient modulo its maximal linear subspace. This passage, together with other simplifying assumptions is explained in Section \ref{prelim}. There we also discuss the passage from a polyhedron to the cone over it.

Section \ref{int_iso_auto} is devoted to integral automorphism groups and isomorphism types, the most interesting for Normaliz. For enumerative applications one must restrict oneself to isomorphisms that respect the degree of vectors, and the passage from a polyhedron to the cone over it endows the latter with another (possibly additional) linear form that allows to go backwards from the cone to the polyhedron.

In Section \ref{other} we outline the computation of rational, algebraic, Euclidean and combinatorial automorphisms, and mention groups of automorphisms whose computation is based on the raw input to Normaliz and does not always yield intrinsic data of the cone or polyhedron defined by the input.

Section \ref{impl} explains the preparation of the input to nauty and lists the computation goals of Normaliz that are available for automorphism groups. Finally, in Section \ref{examples}  we illustrate the computations of Normaliz by two classical examples, the icosahedron and the linear ordering polytopes.  The combinatorial automorphism groups of the linear ordering polytopes have been determined by Fiorini \cite{Fio}. Inspired by Normaliz computations, we determine the Euclidean automorphism groups.

\section{Preliminaries}\label{prelim}

For the basic terminology of discrete convex geometry we refer the reader to Bruns and Gubeladze \cite{BrGu}. In this section we want to fix some basic assumptions that computationally amount to coordinate transformations and that will be assumed in the following to simplify the formalities. Almost all data that will appear are intrinsic and do not depend on the choice of coordinates.

\subsection{Cones}
Let $C$ be a cone in $\RR^d$. The first assumption is that $C$ is full dimensional, i.e., $\dim C = d$. While this restriction may seem completely irrelevant, it is not: the definition of the \emph{dual cone} $C^*=\{\lambda \in (\RR^d)^*: \lambda(x) \ge 0 \text{ for all } x\in C  \}$ depends on it. For the passage to the full dimensional case one simply chooses coordinates in the vector subspace $\RR C$ generated by $C$.

Let $C_0=\{x \in C: -x\in C  \}$. This set of invertible elements of $C$ is a vector subspace of $\RR^d$. The cone $C/C_0$ is the image of $C$ under the natural projection $\RR^d\to \RR^d/C_0$. It is \emph{pointed}, i.e., 0 is its only invertible element. Moreover one has an isomorphism
$$
C = C_0\dirsum C/C_0
$$
of cones. Roughly speaking, the automorphism groups that are our main focus split in the same way. The passage from $C$ to $C/C_0$ is done by Normaliz. It is inevitable for finiteness results.

The passage to a full dimensional pointed cone only concerns the structure as an object of real convex geometry. Integral data, for example Hilbert bases or Hilbert series enumerating lattice points, are defined with respect to a sublattice $L$ of $\ZZ^d$ such that $C$ is generated as a cone by elements of $L$. By a further coordinate transformation we pass to a $\ZZ$-basis of $L$ which (if $C$ is full dimensional) is also a basis of the vector space $\RR^d$. After this transformation we can assume that $L=\ZZ^d$.

To sum up: we will assume that a cone $C\subset \RR^d$ is full dimensional and pointed. Under this assumption $C^*$ is pointed and full dimensional as well, and  $C\iso C^{**}$. Moreover, $C$ and $C^*$ are generated by their extreme rays. 

The lattice of reference is $\ZZ^d$, should it be relevant. The lattice structure defines natural choices of generators for $C$ and $C^*$. Each extreme ray of $C$ contains a unique integral point with coprime coordinates that we call an \emph{extreme integral generator} of $C$. The extreme integral generators of $C^*$ are called the \emph{support forms} of $C$. This terminology is justified since the support forms $\sigma_1,\dots,\sigma_s$ of $C$ define support hyperplanes $H_i=\{x\in \RR^d: \sigma_:i(x)=0\}$ of $C$, and $C$ is the irredundant  intersection of the linear halfspaces $H_i^+=\{x\in \RR^d: \sigma_i(x)\ge 0\}$. The intersections $C\cap H_i$ are the facets of $C$.
 
\subsection{Polyhedra}
By definition  a polyhedron $P\subset \RR^d$ is the intersection of finitely many \emph{affine} halfspaces. Theoretically and computationally one associates a cone $\cC(P)$ with $P$, the \emph{cone over $P$}, defined as the closure of the set $\RR_+(P\times \{1\})$ in $\RR^{d+1}$ if $P\neq\emptyset$, and  $\cC(\emptyset)=\{0\}$. The \emph{dehomogenizing hyperplane} is $\cH(P)=\{x\in\RR^{d+1}:x_{d+1}=1\}$.

The passage from $P$ to $\cC(P)$ is called \emph{homogenization}. Normaliz uses the term \emph{dehomogenization} for the linear form $\delta$ which defines $\cH(P)$ by $\cH(P)=\{x\in\RR^{d+1}:\delta(x)=1\}$. So far $\delta(x)=x_{d+1}$, but for flexibility it is necessary to allow any nonzero $\delta\in (\RR^{d+1})^*$. Clearly, as soon as integrality comes into play, $\delta$ must take integer values on $\ZZ^{d+1}$.

There is a second cone defined by $P$, its \emph{recession cone}, namely $\rec P=\{x\in \cC(P): \delta(x) = 0 \}$. One has $\rec P=\{0\}$ if and only if $P$ is a polytope, ,i.e., a bounded polyhedron. The vectors in $x\in \rec P$ satisfy the condition that $y+x\in P$ for all $y\in P$. The condition is also necessary for $x$ to be in $\rec P$ if $P\neq\emptyset$.

This allows us to compute automorphism groups and isomorphism types of polyhedra in terms of $\cC(P)$: for the automorphism group we select the automorphisms that map $\cH(P)$ into itself, and for isomorphism types the hyperplane $\cH(P)$ must be encoded in the normal form.

The coordinate transformations mentioned above are applied to $\cC(P)$ in order to reach the full dimension for $\cC(P)$ and to pass to a pointed quotient. This includes the transformation of the dehomogenization $\delta$ which need no longer be a coordinate function after the transformation.

\section{Integral isomorphisms and automorphisms}\label{int_iso_auto}

\subsection{Rational cones}
A rational cone $C\subset \RR^d$ is generated by finitely many vectors with integral coordinates. The intersection $\mon C=C\cap \ZZ^d$ is a finitely generated monoid by Gordan's lemma. It has a unique minimal system of generators (if $C$ is pointed), called the \emph{Hilbert basis}. For all this see \cite{BrGu}. A $\ZZ$-isomorphism (or integral isomorphism) is represented by a matrix with entries in $\ZZ$ whose inverse has integral entries  as well.

\begin{theorem}\label{basic}
Let $C$ and $D$ be cones in $\RR^d$. Then the following are equivalent for an $\RR$-automorphism $\phi$ of $\RR^d$:
\begin{enumerate}
\item $\phi$ restricts to a $\ZZ$-isomorphism of $C$ and $D$;
\item $\phi$  maps $\Hilb C$ onto $\Hilb D$;
\item $\phi$  restricts to an isomorphism of the monoids $\mon C$ and $\mon D$.
\end{enumerate}
In particular there are only finitely many $\ZZ$-isomorphisms of $C$ and $D$.
\end{theorem}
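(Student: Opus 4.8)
The plan is to prove the cycle of implications $(1)\Rightarrow(3)\Rightarrow(2)\Rightarrow(1)$ and then deduce finiteness from characterization $(2)$.

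\emph{Step 1: $(1)\Rightarrow(3)$.} Suppose $\phi$ restricts to a $\ZZ$-isomorphism $C\to D$. A $\ZZ$-isomorphism is by definition given by a matrix $A\in\GL_d(\ZZ)$, so $\phi(\ZZ^d)=\ZZ^d$ and $\phi(C)=D$. Intersecting, $\phi(\mon C)=\phi(C\cap\ZZ^d)=D\cap\ZZ^d=\mon D$. Since $\phi$ is $\RR$-linear it is additive, hence it restricts to a monoid isomorphism $\mon C\to\mon D$.

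\emph{Step 2: $(3)\Rightarrow(2)$.} If $\phi$ restricts to an isomorphism of the monoids $\mon C$ and $\mon D$, then it carries the minimal generating system of $\mon C$ onto the minimal generating system of $\mon D$, because the property of being a minimal generating set is preserved by any monoid isomorphism. (Here one uses that $C$, hence also $D\cong C$, is pointed so that the Hilbert basis is well defined as the \emph{unique} minimal generating system, as recalled in the text preceding the theorem.) Thus $\phi(\Hilb C)=\Hilb D$.

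\emph{Step 3: $(2)\Rightarrow(1)$.} Assume the $\RR$-automorphism $\phi$ of $\RR^d$ satisfies $\phi(\Hilb C)=\Hilb D$. Since $C$ is a rational cone, $\Hilb C$ generates $C$ as a cone and spans $\RR^d$ as a vector space (recall $C$ is full dimensional by the conventions of Section \ref{prelim}); likewise for $D$. Hence $\phi(C)=\phi(\cone(\Hilb C))=\cone(\phi(\Hilb C))=\cone(\Hilb D)=D$. Moreover $\Hilb C\subset\ZZ^d$ contains a vector space basis of $\RR^d$, and from such a basis every element of $\ZZ^d$ is an integral linear combination of Hilbert basis elements (the monoid $\mon C$ generates $\ZZ^d$ as a group because the Hilbert basis does); since $\phi$ maps $\Hilb C$ into $\ZZ^d$, it follows that $\phi(\ZZ^d)\subset\ZZ^d$. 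Applying the same reasoning to $\phi^{-1}$ and $\Hilb D$ gives $\phi^{-1}(\ZZ^d)\subset\ZZ^d$, so $\phi(\ZZ^d)=\ZZ^d$, i.e., the matrix of $\phi$ lies in $\GL_d(\ZZ)$. Together with $\phi(C)=D$ this says $\phi$ restricts to a $\ZZ$-isomorphism of $C$ and $D$, which is $(1)$.

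\emph{Step 4: finiteness.} An $\RR$-automorphism $\phi$ with $\phi(\Hilb C)=\Hilb D$ is determined by a bijection $\Hilb C\to\Hilb D$, since $\Hilb C$ contains a basis of $\RR^d$ and a linear map is determined by its values on a spanning set. As both Hilbert bases are finite, there are only finitely many such bijections, hence only finitely many $\ZZ$-isomorphisms of $C$ and $D$. The step I expect to require the most care is $(2)\Rightarrow(1)$, specifically verifying that $\phi$ is invertible over $\ZZ$ rather than merely mapping the Hilbert basis correctly; the point is that one must invoke both that $\Hilb C$ generates the full lattice $\ZZ^d$ as a group and that the symmetric hypothesis on $\phi^{-1}$ lets one control integrality in both directions. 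The remaining implications are essentially immediate from the uniqueness of the Hilbert basis for pointed cones and the linearity of $\phi$.
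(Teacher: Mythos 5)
Your proof is correct and follows essentially the same route as the paper: $(1)\Rightarrow(3)$ via $\phi(\ZZ^d)=\ZZ^d$ and additivity, $(3)\Rightarrow(2)$ via the intrinsic characterization of the Hilbert basis as the minimal (equivalently, irreducible) generating system, $(2)\Rightarrow(1)$ from the fact that the Hilbert bases span, and finiteness directly from $(2)$. You are in fact more explicit than the paper about the integrality of $\phi$ and $\phi^{-1}$ in Step 3; just note that the correct justification there is that $\Hilb C$ generates $\ZZ^d$ as a group because $\mon C=C\cap\ZZ^d$ does for a full-dimensional cone, not merely that $\Hilb C$ contains a vector-space basis of $\RR^d$ (which by itself would only yield a finite-index sublattice).
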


\begin{proof}
A $\ZZ$-isomorphism of $C$ and $D$ maps $\ZZ^d$ bijectively onto itself, and $C$ bijectively onto $D$. Therefore it maps $\mon C$ bijectively onto $\mon D$. Since $\phi$ is additive, it is an isomorphism of the two monoids.

The Hilbert bases of $C$ and $D$ are uniquely determined by algebraic conditions on their elements: they consist of the irreducible elements in $\mon C$ and $\mon D$ respectively.

Both Hilbert bases generate $\RR^d$ as a vector space. A linear endomorphism of $\RR^d$ that contains a system of generators in its image is automatically bijective. Moreover, the cone generated by $\Hilb C$ is mapped onto the cone generated by $\Hilb D$.

The finiteness of the set of isomorphisms follows immediately from (2).
\end{proof}

To simplify language we will identify $\phi$ with its pertaining restrictions in the situation of Theorem \ref{basic}.

\begin{corollary}\label{basic_aut}
With the notation of Theorem \ref{basic} the following are equivalent:
\begin{enumerate}
\item $\phi$ is a $\ZZ$-automorphism of $C$;
\item $\phi$  maps $\Hilb C$ onto itself;
\item $\phi$  is an automorphism of the monoid $\mon C$.
\end{enumerate}
In particular there are only finitely many $\ZZ$-automorphisms of $C$.
\end{corollary}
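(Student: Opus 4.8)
The plan is to deduce Corollary \ref{basic_aut} directly from Theorem \ref{basic} by specializing to the case $D = C$. Since all three conditions in the corollary are precisely conditions (1)--(3) of the theorem with $D$ replaced by $C$, there is essentially nothing new to prove beyond observing that the substitution is legitimate and that $C$ is itself a cone in $\RR^d$, so the hypotheses of the theorem are met.

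First I would note that a $\ZZ$-automorphism of $C$ is by definition a $\ZZ$-isomorphism of $C$ with itself, so condition (1) here is condition (1) of Theorem \ref{basic} with $D = C$. The equivalence (1)$\iff$(2) then reads: $\phi$ restricts to a $\ZZ$-isomorphism of $C$ and $C$ iff $\phi$ maps $\Hilb C$ onto $\Hilb C = \Hilb C$, which is exactly the theorem's (1)$\iff$(2). Likewise (1)$\iff$(3) becomes the statement that $\phi$ restricts to a $\ZZ$-isomorphism of $C$ onto itself iff it restricts to an isomorphism of the monoid $\mon C$ onto $\mon C$, i.e.\ an automorphism of $\mon C$; this is the theorem's (1)$\iff$(3). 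Hence all three conditions of the corollary are pairwise equivalent.

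For the finiteness assertion I would simply invoke the ``in particular'' clause of Theorem \ref{basic}: taking $D = C$ there already gives that the set of $\ZZ$-isomorphisms of $C$ and $C$ — that is, the set of $\ZZ$-automorphisms of $C$ — is finite. (Concretely, by condition (2) such a $\phi$ is determined by its action on the finite set $\Hilb C$, which it permutes, so there are at most $|\Hilb C|!$ of them.)

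I do not expect any genuine obstacle here; the only thing to be careful about is the bookkeeping of which ``$D$'' each clause refers to, and to remark explicitly that the three equivalent conditions of the corollary are obtained from the theorem by the single substitution $D = C$. The proof is therefore a one-line reduction, and I would write it as such.
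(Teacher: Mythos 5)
Your proof is correct and matches the paper's intent exactly: the corollary is stated there without a separate proof precisely because it is the specialization $D=C$ of Theorem \ref{basic}, including the finiteness claim. Nothing further is needed.
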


The basic computational tasks are
\begin{enumerate}
\item deciding whether $C$ and $D$ are $\ZZ$-isomorphic;
\item computing  the group $\Aut_\ZZ C$.
\end{enumerate}
For these related tasks it is useful to bring duality into play. Let $\phi:V\to W$ be a linear map of vector spaces. The dual $\phi^*:W^*\to V^*$ of $\phi$ is defined by $(\phi^*(\lambda))(x) = \lambda(\phi(x))$ for $x\in  V$, $\lambda\in W^*$. Finite dimensional vector spaces are reflexive: the bidual evaluation $\langle \lambda, x\rangle = \lambda(x)$ for $\lambda\in V^*$, $x\in V$ induces a natural linear map $V\to V^{**}$, which is a functorial isomorphism if $V$ has finite dimension. In particular one can identify $\phi^{**}$ and $\phi$ if $\phi$ is a homomorphism of finite dimensional vector spaces. For isomorphisms $\phi$ it is convenient to consider
$$
\phi^\vee = (\phi^*)^{-1} = (\phi^{-1})^*.
$$
Note that the pair $(\phi,\phi^\vee)$ preserves the bilinear evaluation of $\RR^d\times (\RR^d)^*$:  for $x\in\RR^d$ and $\lambda\in (\RR^d)^*$ one has
$$
\langle \phi(x),\phi^\vee(\lambda)\rangle=\phi^\vee(\lambda)(\phi(x)) = \lambda(\phi^{-1}(\phi(x))) = \lambda(x) = \langle x, \lambda\rangle.
$$

All this carries over to finite dimensional cones. Recall that the dual cone of the cone $C\subset \RR^d$ is
$$
C^* = \{\lambda\in (\RR^d)^*: \lambda(x) \ge 0 \text{ for all }x\in C\},
$$
and that one can naturally identify $C$ and $C^{**}$. If $\phi:C\to D$ is an isomorphism, then  $\phi^\vee: C^*\to D^*$ is an isomorphism as well. 

The following theorem can help in the computation of automorphism groups, as we will see below. The proof is easy and can be left to the reader.

\begin{theorem}\label{dual_aut}
The map $\vphantom{C}^\vee:\Aut_\ZZ C \to \Aut_\ZZ C^*$ is an isomorphism.
\end{theorem}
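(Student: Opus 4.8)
My plan is to verify three things in turn: that $\phi\mapsto\phi^\vee$ actually sends $\Aut_\ZZ C$ into $\Aut_\ZZ C^*$, that it is compatible with composition, and that it is bijective, the last point to be settled by exhibiting the analogously defined map $\Aut_\ZZ C^*\to\Aut_\ZZ C^{**}=\Aut_\ZZ C$ as a two-sided inverse. Throughout I use the conventions fixed in Section \ref{prelim}, in particular that $C$ is full-dimensional and pointed, so that $C$ is canonically identified with $C^{**}$ and $\RR^d$ with $(\RR^d)^{**}$, and that the reference lattice is $\ZZ^d$.

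For the first step — well-definedness — I would argue as follows. Let $\phi\in\Aut_\ZZ C$. By Corollary \ref{basic_aut} it maps $\ZZ^d$ onto itself and $C$ onto itself, so its matrix in the standard basis lies in $\GL_d(\ZZ)$; consequently $\phi^*$, and hence $\phi^\vee=(\phi^*)^{-1}=(\phi^{-1})^*$, has matrix in $\GL_d(\ZZ)$ with respect to the dual basis, and in particular $\phi^\vee$ is an $\RR$-automorphism of $(\RR^d)^*$ carrying the dual lattice onto itself. It then remains to see that $\phi^\vee(C^*)=C^*$. Using that the pair $(\phi,\phi^\vee)$ preserves the bilinear evaluation, for $\lambda\in C^*$ and $x\in C$ one has $\phi^\vee(\lambda)(\phi(x))=\lambda(x)\ge 0$; since $\phi$ maps $C$ onto $C$, this says exactly $\phi^\vee(\lambda)\in C^*$, so $\phi^\vee(C^*)\subseteq C^*$. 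Applying the same to $\phi^{-1}$, whose associated map is $(\phi^{-1})^\vee=\phi^*=(\phi^\vee)^{-1}$, yields $(\phi^\vee)^{-1}(C^*)\subseteq C^*$, and therefore $\phi^\vee(C^*)=C^*$. Thus $\phi^\vee$ restricts to a $\ZZ$-automorphism of $C^*$.

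The second step, the homomorphism property, is immediate from the contravariance of dualization: $(\psi\circ\phi)^\vee=((\psi\circ\phi)^*)^{-1}=(\phi^*\circ\psi^*)^{-1}=(\psi^*)^{-1}\circ(\phi^*)^{-1}=\psi^\vee\circ\phi^\vee$. For the third step, bijectivity, I would note that dualization of linear isomorphisms commutes with taking inverses and that $\phi^{**}=\phi$ under the canonical identification; a short computation then gives $(\phi^\vee)^\vee=\phi$, and symmetrically on the dual side. Hence the map $\Aut_\ZZ C^*\to\Aut_\ZZ C^{**}=\Aut_\ZZ C$, $\psi\mapsto\psi^\vee$, is a two-sided inverse of $\phi\mapsto\phi^\vee$, which is therefore a group isomorphism.

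I do not expect a genuine obstacle: the whole argument is bookkeeping with dual maps and dual cones. The only two points worth a moment's attention are (i) that membership in $\Aut_\ZZ C$ forces the representing matrix into $\GL_d(\ZZ)$, which is precisely what keeps $\phi^\vee$ integral after transposing and inverting, and (ii) that the identification $C=C^{**}$ — legitimate exactly because of the full-dimensionality and pointedness arranged in Section \ref{prelim} — is what makes $\phi\mapsto\phi^\vee$ an involution, and hence bijective.
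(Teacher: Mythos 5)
Your proof is correct and fills in precisely the argument the paper intends (the paper explicitly leaves the proof to the reader), using exactly the machinery set up just before the statement: integrality of the matrix of $\phi^\vee$, preservation of the bilinear evaluation to get $\phi^\vee(C^*)=C^*$, and the identification $C\iso C^{**}$ to make $\vphantom{C}^\vee$ an involution. The only cosmetic quibble is that integrality of $\phi$ and $\phi^{-1}$ is the \emph{definition} of a $\ZZ$-automorphism rather than a consequence of Corollary \ref{basic_aut}.
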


The decision whether cones $C$ and $D$ are isomorphic can be based on the comparison of normal forms. For the definition of the normal form we use the support forms $\sigma_1,\dots,\sigma_s$ of $C$. They define the \emph{standard embedding} of $C$: it is the map
$$
\epsilon_C: C\to \RR^s,\qquad \epsilon_C(x) =(\sigma_1(x),\dots,\sigma_s(x)).
$$
At this point we must break our simplifying assumptions: in general $\epsilon_C(C)$ is not full dimensional, and its lattice of reference is $\epsilon_C(\ZZ^d)$. (In general one has $\epsilon_C(\ZZ^d)\neq \epsilon_C(C)\cap \ZZ^s$. The quotient $\ZZ^s/\epsilon_C(\ZZ^d)$ is the class group of $\mon C$; see \cite[4.62]{BrGu}.)

\begin{theorem}\label{standard_emb}
The cones $C$ and $D$ in $\RR^d$ are $\ZZ$-isomorphic if they have the same number $s$ of facets,  $\epsilon_c(C)=\epsilon_D(D)$ and $\epsilon_C(\ZZ^d)=\epsilon_D(\ZZ^d)$, up to a permutation of the coordinates of $\RR^s$.
\end{theorem}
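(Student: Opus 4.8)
The plan is to build an explicit $\ZZ$-isomorphism $\phi:C\to D$ out of the hypothesis and then invoke Theorem~\ref{basic}. After reindexing the facets of $D$ we may assume the coordinate permutation is the identity, so that $\epsilon_C(C)=\epsilon_D(D)$ as subsets of $\RR^s$ and $\epsilon_C(\ZZ^d)=\epsilon_D(\ZZ^d)$ as sublattices of $\ZZ^s$. The first step is to observe that each standard embedding $\epsilon_C:\RR^d\to\RR^s$ is injective: $C$ is full dimensional and pointed, so its support forms $\sigma_1,\dots,\sigma_s$ span $(\RR^d)^*$, hence their common kernel is $0$. Therefore $\epsilon_C$ is an $\RR$-linear isomorphism onto the subspace $\RR^d_C:=\epsilon_C(\RR^d)=\RR\,\epsilon_C(C)$, and likewise for $D$. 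Since $\epsilon_C(C)=\epsilon_D(D)$, taking the generated subspaces gives $\RR^d_C=\RR^d_D=:U$, a common $d$-dimensional subspace of $\RR^s$.

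Now set $\phi:=\epsilon_D^{-1}\circ\epsilon_C:\RR^d\to\RR^d$, using that both $\epsilon_C$ and $\epsilon_D$ are bijections onto $U$. This $\phi$ is an $\RR$-automorphism of $\RR^d$. It maps $C$ onto $D$ because $\phi(C)=\epsilon_D^{-1}(\epsilon_C(C))=\epsilon_D^{-1}(\epsilon_D(D))=D$. It maps $\ZZ^d$ onto $\ZZ^d$ because $\phi(\ZZ^d)=\epsilon_D^{-1}(\epsilon_C(\ZZ^d))=\epsilon_D^{-1}(\epsilon_D(\ZZ^d))=\ZZ^d$; in particular $\phi$ is represented by an integral matrix with integral inverse. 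Thus $\phi$ restricts to a $\ZZ$-isomorphism of $C$ and $D$ in the sense of condition~(1) of Theorem~\ref{basic}, and we are done; alternatively one checks directly that $\phi$ carries $\Hilb C=\mon C\setminus\{0+\text{reducibles}\}$ onto $\Hilb D$, which is condition~(2).

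The only genuine subtlety is the role of the second hypothesis $\epsilon_C(\ZZ^d)=\epsilon_D(\ZZ^d)$. Without it one would still get an $\RR$-linear $\phi$ carrying $C$ onto $D$, but not necessarily one defined over $\ZZ$: the equality $\epsilon_C(C)=\epsilon_D(D)$ of cones only pins down $\phi$ up to the real structure and says nothing about how the two lattices $\epsilon_C(\ZZ^d)$ and $\epsilon_D(\ZZ^d)$ inside $U$ are related. It is exactly the coincidence of these lattices that forces $\phi(\ZZ^d)=\ZZ^d$ and hence integrality of both $\phi$ and $\phi^{-1}$. So the main point to get right in writing up the proof is to keep the lattice bookkeeping honest: note that $\epsilon_C(\ZZ^d)$ need not equal $\epsilon_C(C)\cap\ZZ^s$ (as the parenthetical remark before the theorem stresses, the discrepancy is measured by the class group), so one must phrase everything in terms of the image lattices $\epsilon_C(\ZZ^d)$ and not in terms of $\ZZ^s\cap U$.
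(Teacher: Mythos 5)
Your argument is correct and is essentially the paper's own proof of this implication: the paper simply notes that $\epsilon_C$ and $\epsilon_D$ are $\ZZ$-isomorphisms onto their images, while you spell out the one point it leaves implicit, namely that $\epsilon_C$ is injective because the support forms of a full-dimensional pointed cone span $(\RR^d)^*$, and you keep the lattice bookkeeping explicit. Be aware, though, that the paper actually proves the statement as an equivalence (its proof also shows that a $\ZZ$-isomorphism $\phi:C\to D$ forces $\epsilon_C(C)=\epsilon_D(D)$ and $\epsilon_C(\ZZ^d)=\epsilon_D(\ZZ^d)$ up to a coordinate permutation, via $\phi^\vee$ permuting the support forms); that converse is what justifies calling $(\epsilon_C(C),\epsilon_C(\ZZ^d))$ a \emph{normal form}, so if the intended reading is ``if and only if'' your proposal covers only half of it.
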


\begin{proof}
By construction one has $\ZZ$-automorphisms $C\cong \epsilon_C(C)$ and $D\cong \epsilon_D(D)$, which proves the implication $\Longleftarrow$. For the converse implication let $\phi:C\to D$ be a $\ZZ$-automorph\-ism.  Then $\phi^\vee(\sigma_1), \dots, \phi^\vee(\sigma_s)$ are the support forms of $D$ if $\sigma_1,\dots\sigma_s$ are the support forms of $C$. This implies $\epsilon_D(\phi(x))=\epsilon_C(x)$ for all $x\in C$, including $x\in \mon C$ which is mapped isomorphically onto $\mon D$. Hence $\epsilon_C(C)=\epsilon_D(D)$ and $\epsilon_C(\ZZ^d)= \epsilon_D(\ZZ^d)$ (if we use the support forms for $C$ and $D$ in the given order).
\end{proof}

The theorem justifies us in calling the pair $(\epsilon_C(C), \epsilon_C(\ZZ^d))$ the \emph{$\ZZ$-normal form} of $C$. It can be computed in finitely many steps as we will now discuss.

Let $x_1,\dots,x_n$ be the Hilbert basis of $C$. Then $\epsilon_C(x_1),\dots,\epsilon_C(x_n))$ generate $\epsilon_C(C)$, and the matrix $S$ given by 
\begin{equation}
S_{ij}=\sigma_j(x_i), \qquad i=1,\dots,n, j=1,\dots,s,\label{type}
\end{equation}
determines the isomorphism type of $C$, and the isomorphism type of $C$ determines $S$ up to the order of the rows and columns. By the \emph{canonical form} of $S$ we denote the lexicographically greatest matrix that one can obtain by permutations of rows and columns from $S$ where we compare matrices of the same format lexicographically as follows: 
\begin{enumerate}
\item a row is lexicographically greater than another row if it is lexicographically greater under the comparison of coordinates from left to right;
\item a matrix is lexicographically greater than another matrix if it is lexicographically greater under the comparison of rows from top to bottom.
\end{enumerate}
For this choice the unit matrix is its own canonical form. We let 
$$
\type_\ZZ C
$$ 
denote the canonical form of $S$.

\begin{theorem}\label{type_iso}
Let $C$ and $D$ be cones. Then the following are equivalent:
\begin{enumerate}
\item $C$ and $D$ are $\ZZ$-isomorphic;
\item $C^*$ and $D^*$ are $\ZZ$-isomorphic;
\item $\type_\ZZ C = \type_\ZZ D$.
\end{enumerate}
\end{theorem}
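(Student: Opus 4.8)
The plan is to prove the two equivalences $(1)\Leftrightarrow(2)$ and $(1)\Leftrightarrow(3)$ separately, reducing each to material already in place; as throughout this section, $C$ and $D$ are full dimensional and pointed, so that Hilbert bases, support forms and $\type_\ZZ$ are defined. For $(1)\Leftrightarrow(2)$ I would use the functoriality of $\vphantom{C}^\vee$: if $\phi\colon C\to D$ is a $\ZZ$-isomorphism, then $\phi^\vee=(\phi^{-1})^*$ has an integral matrix, as does its inverse $\phi^*$, and $\phi^\vee$ restricts to an isomorphism $C^*\to D^*$ as recalled just before Theorem~\ref{dual_aut}; this gives $(1)\Rightarrow(2)$. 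Applying the same implication to the pair $C^*,D^*$ and composing with the natural $\ZZ$-isomorphisms $C\iso C^{**}$ and $D\iso D^{**}$ yields $(2)\Rightarrow(1)$.

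For $(1)\Rightarrow(3)$ I would transport the data defining the matrix $S$ (with entries $S_{ij}=\sigma_j(x_i)$) along an isomorphism $\phi\colon C\to D$. By Theorem~\ref{basic}, $\phi$ maps the Hilbert basis $x_1,\dots,x_n$ of $C$ bijectively onto that of $D$; and $\phi^\vee\colon C^*\to D^*$, being a $\ZZ$-isomorphism, preserves extreme rays of the dual cones and primitivity of integral vectors, hence maps the support forms $\sigma_1,\dots,\sigma_s$ of $C$ bijectively onto the support forms of $D$. Since $\langle\phi(x),\phi^\vee(\lambda)\rangle=\langle x,\lambda\rangle$, the matrix built from $D$ with respect to the ordered data $\phi(x_1),\dots,\phi(x_n)$ and $\phi^\vee(\sigma_1),\dots,\phi^\vee(\sigma_s)$ equals $S$ entrywise. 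Hence the matrix of $D$ taken with respect to any fixed ordering of its own Hilbert basis and support forms differs from $S$ only by a permutation of rows and columns, so the two canonical forms agree: $\type_\ZZ C=\type_\ZZ D$.

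For the converse $(3)\Rightarrow(1)$, the equality $\type_\ZZ C=\type_\ZZ D$ forces the matrices $S^{(C)}$ and $S^{(D)}$ to share the format $n\times s$ --- so $C$ and $D$ have equally many Hilbert basis elements and equally many facets --- and to coincide after a row permutation $\rho$ and a column permutation $\pi$. Relabelling the Hilbert basis of $C$ along $\rho$ and its support forms along $\pi$, one obtains $\epsilon_C(x_i)=\epsilon_D(y_i)$ in $\RR^s$ for the reordered Hilbert bases $x_1,\dots,x_n$ of $C$ and $y_1,\dots,y_n$ of $D$. Since $\epsilon_C$ restricts to a monoid homomorphism on $\mon C$, the $x_i$ generate $\mon C$, and $\mon C$ generates $C$ as a cone, it follows that $\epsilon_C(C)=\cone(\epsilon_C(x_1),\dots,\epsilon_C(x_n))=\cone(\epsilon_D(y_1),\dots,\epsilon_D(y_n))=\epsilon_D(D)$; and since $C$ is full dimensional, $\mon C$ also generates $\ZZ^d$ as a group (for $v\in\ZZ^d$ and any lattice point $u$ in the interior of $C$ one has $tu,\ tu+v\in\mon C$ for $t\gg 0$), whence $\epsilon_C(\ZZ^d)=\gp(\epsilon_C(x_1),\dots,\epsilon_C(x_n))=\gp(\epsilon_D(y_1),\dots,\epsilon_D(y_n))=\epsilon_D(\ZZ^d)$. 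These equalities hold up to the coordinate permutation $\pi$, so Theorem~\ref{standard_emb} delivers a $\ZZ$-isomorphism $C\iso D$.

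The step I expect to need the most care is the lattice matching in $(3)\Rightarrow(1)$: one must know that the Hilbert basis already spans the reference lattice $\ZZ^d$ over $\ZZ$, so that the columns of $S$ determine not merely the cone $\epsilon_C(C)$ but also the lattice $\epsilon_C(\ZZ^d)$ --- precisely the place where full dimensionality enters, and without which $\type_\ZZ C$ would fail to determine the $\ZZ$-normal form. Everything else is bookkeeping with row and column permutations, together with the pairing identity $\langle\phi(x),\phi^\vee(\lambda)\rangle=\langle x,\lambda\rangle$ and reflexivity $C\iso C^{**}$.
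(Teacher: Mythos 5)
Your proof is correct; the paper states Theorem~\ref{type_iso} without proof, and your argument is exactly the intended one, deducing $(1)\Leftrightarrow(2)$ from duality ($\phi\mapsto\phi^\vee$ together with reflexivity $C\iso C^{**}$) and $(1)\Leftrightarrow(3)$ from Theorem~\ref{standard_emb}, since $\type_\ZZ$ records the pair $(\epsilon_C(C),\epsilon_C(\ZZ^d))$ up to coordinate permutation. The one step genuinely needing care --- that the Hilbert basis generates $\ZZ^d$ as a group, so that the matrix $S$ determines the lattice $\epsilon_C(\ZZ^d)$ and not just the cone $\epsilon_C(C)$ --- is handled correctly by your interior-point argument.
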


Despite the symmetry of the statements (1) and (2) in the theorem, the construction of $\type_\ZZ C$ is not symmetric in $C$ and $C^*$: we use the Hilbert basis of $C$, but only the extreme integral generators of $C^*$. In order to achieve symmetry, one could use $\Hilb C^*$, but there is no need for this additional complication.

Our observations so far also allow us to identify the $\ZZ$-automorphism group of $C$ with a finite group of permutations of a purely combinatorial object. In the following $\cS_n$ denotes the permutation group of $\{1,\dots,n \}$.

\begin{theorem}\label{aut_perm}
Let $x_1,\dots,x_n$ be the Hilbert basis of $C$ and $\sigma_1,\dots,\sigma_s$ the support forms.. Then $\Aut_\ZZ C$ can be identified with the group of permutations $\Pi\in \cS_n$ for which there exists a permutation $\Sigma\in \cS_s$ such that 
\begin{equation}\label{GxLF1}
\langle x_{\Pi(i)}, \sigma_{\Sigma(j)}\rangle = \langle x_i, \sigma_j\rangle,,\qquad i=1,\dots,n,\ j=1,\dots,s.
\end{equation}
\end{theorem}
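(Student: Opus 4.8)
The plan is to exhibit the claimed identification explicitly as a group isomorphism, by unwinding the characterizations of $\Aut_\ZZ C$ established earlier. By Corollary \ref{basic_aut}, a $\ZZ$-automorphism $\phi$ of $C$ is the same thing as an $\RR$-automorphism of $\RR^d$ permuting the Hilbert basis $x_1,\dots,x_n$; write $\phi(x_i)=x_{\Pi(i)}$ for the induced permutation $\Pi\in\cS_n$. First I would observe that the assignment $\phi\mapsto\Pi$ is injective: since $x_1,\dots,x_n$ span $\RR^d$ (as noted in the proof of Theorem \ref{basic}), $\phi$ is determined by its values on the $x_i$, hence by $\Pi$. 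It is clearly a group homomorphism into $\cS_n$, so $\Aut_\ZZ C$ is identified with its image.

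Next I would pin down the image. Given $\phi\in\Aut_\ZZ C$ with permutation $\Pi$, the dual automorphism $\phi^\vee=(\phi^*)^{-1}$ permutes the support forms of $C$ (this is exactly the observation used in the proof of Theorem \ref{standard_emb}, and is the content of Theorem \ref{dual_aut}): $\phi^\vee(\sigma_j)=\sigma_{\Sigma(j)}$ for a unique $\Sigma\in\cS_s$. Then, using that $(\phi,\phi^\vee)$ preserves the bilinear evaluation of $\RR^d\times(\RR^d)^*$,
\begin{equation*}
\langle x_{\Pi(i)},\sigma_{\Sigma(j)}\rangle=\langle\phi(x_i),\phi^\vee(\sigma_j)\rangle=\langle x_i,\sigma_j\rangle,
\end{equation*}
so $\Pi$ satisfies \eqref{GxLF1} with witness $\Sigma$. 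This shows the image of $\Aut_\ZZ C$ is contained in the set of such $\Pi$.

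For the reverse inclusion, suppose $\Pi\in\cS_n$ satisfies \eqref{GxLF1} for some $\Sigma\in\cS_s$. Since $x_1,\dots,x_n$ span $\RR^d$, there is a unique linear endomorphism $\phi$ of $\RR^d$ with $\phi(x_i)=x_{\Pi(i)}$ for all $i$; as $\Pi$ is a permutation, its image contains the spanning set $\{x_i\}$, so $\phi$ is bijective, and similarly $\phi^{-1}$ comes from $\Pi^{-1}$, which also satisfies \eqref{GxLF1} (with $\Sigma^{-1}$), so $\phi$ restricts to a bijection of the Hilbert basis. By Corollary \ref{basic_aut}, $\phi\in\Aut_\ZZ C$, and by construction its permutation is $\Pi$. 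The only mild subtlety — the step I would flag as needing a line of care rather than a genuine obstacle — is that \eqref{GxLF1} involves two permutations $\Pi$ and $\Sigma$, whereas the identification is with a group of $\Pi$'s alone; one should note that if a valid $\Sigma$ exists for $\Pi$ it is uniquely determined (because the $\sigma_j$ span $(\RR^d)^*$, so $\phi^\vee$ is determined, hence $\Sigma$), and the assignment $\Pi\mapsto\Sigma$ is then automatically a homomorphism $\Aut_\ZZ C\to\cS_s$, consistent with the duality $\Aut_\ZZ C\cong\Aut_\ZZ C^*$ of Theorem \ref{dual_aut}. With injectivity, the homomorphism property, and the description of the image in hand, the identification claimed in the theorem follows.
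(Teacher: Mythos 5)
Your overall strategy---identify $\Aut_\ZZ C$ with a group of permutations of the Hilbert basis via Corollary \ref{basic_aut}, and use $\phi^\vee$ together with the invariance of the pairing $\langle\phi(x),\phi^\vee(\lambda)\rangle=\langle x,\lambda\rangle$ for the forward inclusion---is exactly the route the paper intends (it states the theorem as a consequence of the preceding results, without a written proof), and your forward direction, the injectivity of $\phi\mapsto\Pi$, and the remark on the uniqueness of $\Sigma$ are all fine. But there is a genuine gap in the converse. You write that ``since $x_1,\dots,x_n$ span $\RR^d$, there is a unique linear endomorphism $\phi$ of $\RR^d$ with $\phi(x_i)=x_{\Pi(i)}$.'' A spanning set gives \emph{uniqueness} of such an extension, not \emph{existence}: the Hilbert basis is in general linearly dependent ($n>d$ except in very special cases, e.g.\ the White simplices in the paper's own Remark have $n>4$ in dimension $4$), so one must verify that $\Pi$ preserves every linear relation $\sum_i a_i x_i=0$. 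As written, your argument would show that an arbitrary permutation of $\Hilb C$ extends to a linear automorphism, which is false; indeed, in your construction the hypothesis \eqref{GxLF1} is never actually used to produce $\phi$, only to transport the conclusion to $\Pi^{-1}$.

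The repair uses \eqref{GxLF1} in an essential way. Since $C$ is pointed and full dimensional, the support forms span $(\RR^d)^*$, so the standard embedding $\epsilon_C(x)=(\sigma_1(x),\dots,\sigma_s(x))$ is an injective linear map $\RR^d\to\RR^s$. Condition \eqref{GxLF1} says precisely that $\epsilon_C(x_{\Pi(i)})=P_\Sigma\,\epsilon_C(x_i)$ for all $i$, where $P_\Sigma$ is the linear coordinate permutation of $\RR^s$ determined by $\Sigma$. Hence any relation $\sum_i a_i x_i=0$ yields
\begin{equation*}
\sum_i a_i\,\epsilon_C\bigl(x_{\Pi(i)}\bigr)=P_\Sigma\Bigl(\sum_i a_i\,\epsilon_C(x_i)\Bigr)=0,
\end{equation*}
and therefore $\sum_i a_i x_{\Pi(i)}=0$ by injectivity of $\epsilon_C$. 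This is what licenses the existence of the linear extension $\phi$ with $\phi(x_i)=x_{\Pi(i)}$; once $\phi$ exists, your remaining steps (bijectivity from the spanning property, and Corollary \ref{basic_aut} to conclude $\phi\in\Aut_\ZZ C$) go through unchanged.
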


This theorem reduces the computation of the isomorphism type and the $\ZZ$-automorph\-ism group of a cone $C$ to finitely many steps. Suppose $C$ is defined by a system of generators. Then we have to compute
\begin{enumerate}
\item the support forms of $C$,
\item $\Hilb C$,
\item $\type_\ZZ C$ as the canonical from of the matrix $S$, and/or
\item the group of permutations $\Pi$ in Theorem \ref{aut_perm}.
\end{enumerate} 
Each of these tasks can be expensive, and the Hilbert basis is often the most critical step. Fortunately there is a good chance to get away without it, and it must be avoided if $\Aut_\ZZ C$ is to be exploited in the computation of $\Hilb C$.

\subsection{Using only the extreme rays}

Let $y_1,\dots,y_m$ be the extreme integral generators of $C$. Instead of the full matrix $S$ in \eqref{type} we can consider
\begin{equation}
E=(\sigma_j(y_i): i=1,\dots,m, j=1,\dots,s).\label{etype}
\end{equation}
It is a row selection of $S$ since the extreme integral generators belong to the Hilbert basis. We denote its canonical form by
$$
\etype_\ZZ C.
$$

\begin{proposition}\label{etype_iso}
Let $C$ and $D$ be cones in $\RR^d$.
\begin{enumerate}
\item If $C$ and $D$ are $\ZZ$-isomorphic, then $\etype_\ZZ C = \etype_\ZZ D$.
\item The converse holds if both the extreme integral generators of $C$ and those of $D$, respectively, generate $\ZZ^d$.
\item The converse holds as well if both the support forms of $C$ and those of $D$, respectively, generate $\ZZ^d$.
\end{enumerate}
\end{proposition}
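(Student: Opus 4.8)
For part (1), the plan is to reproduce the argument used for Theorem~\ref{standard_emb} and the implication (1)$\Rightarrow$(3) of Theorem~\ref{type_iso}. Given a $\ZZ$-isomorphism $\phi\colon C\to D$, it maps $\ZZ^d$ bijectively onto itself and $C$ onto $D$, hence carries extreme rays of $C$ to extreme rays of $D$ and, preserving coprimality of integral points, extreme integral generators of $C$ to extreme integral generators of $D$. Dually $\phi^\vee\colon C^*\to D^*$ is a $\ZZ$-isomorphism, so it carries the support forms of $C$ (the extreme integral generators of $C^*$) to those of $D$. Since $\langle\phi(y),\phi^\vee(\sigma)\rangle=\langle y,\sigma\rangle$, the matrix $E$ of \eqref{etype} attached to $C$ coincides with the one attached to $D$ after a permutation of rows and a permutation of columns; passing to canonical forms gives $\etype_\ZZ C=\etype_\ZZ D$.

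For part (2), the plan is to reconstruct an isomorphism from the equality of canonical forms. After relabelling the extreme integral generators $z_1,\dots,z_m$ and the support forms $\tau_1,\dots,\tau_s$ of $D$, we may assume $\sigma_j(y_i)=\tau_j(z_i)$ for all $i,j$. First I would show that $y_i\mapsto z_i$ extends to a linear map $\phi\colon\RR^d\to\RR^d$: if $\sum_i a_i y_i=0$ then $\tau_j\bigl(\sum_i a_i z_i\bigr)=\sum_i a_i\tau_j(z_i)=\sum_i a_i\sigma_j(y_i)=\sigma_j\bigl(\sum_i a_i y_i\bigr)=0$ for every $j$, and since $D$ is full dimensional and pointed its support forms span $(\RR^d)^*$, forcing $\sum_i a_i z_i=0$. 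Because the $z_i$ span $\RR^d$, $\phi$ is surjective, hence an $\RR$-automorphism of $\RR^d$. By the two generation hypotheses $\phi(\ZZ^d)=\langle z_1,\dots,z_m\rangle=\ZZ^d$, so $\phi\in\GL_d(\ZZ)$, and from $C=\cone(y_1,\dots,y_m)$ and $D=\cone(z_1,\dots,z_m)$ we get $\phi(C)=D$. Thus $\phi$ restricts to a $\ZZ$-isomorphism of $C$ and $D$.

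For part (3), the plan is to dualize and invoke part (2). The support forms of $C$ are the extreme integral generators of $C^*$, and under the identification $C=C^{**}$ the extreme integral generators of $C$ are the support forms of $C^*$, so the matrix $E$ of $C$ is the transpose of the matrix $E$ of $C^*$; hence $\etype_\ZZ C=\etype_\ZZ D$ is equivalent to $\etype_\ZZ C^*=\etype_\ZZ D^*$. The hypothesis of (3) says exactly that the extreme integral generators of $C^*$ generate $\ZZ^d$ and likewise for $D^*$, which is the hypothesis of (2) for the pair $C^*,D^*$. So part (2) yields a $\ZZ$-isomorphism $C^*\cong D^*$, and applying the operation $\vphantom{C}^\vee$ (equivalently, the equivalence (1)$\Leftrightarrow$(2) of Theorem~\ref{type_iso}) we obtain $C\cong D$.

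The step I expect to be the main obstacle is the construction of $\phi$ in part (2): one must use full dimensionality of $D$ twice, once so that the support forms span the dual space (well-definedness of $\phi$) and once so that the $z_i$ span $\RR^d$ (surjectivity), and both generation hypotheses must be used to conclude $\phi(\ZZ^d)=\ZZ^d$ exactly, not merely up to finite index. In part (3) the only delicate point is the bookkeeping that combines the transpose identification with the bidual identification of extreme integral generators, so that the generation hypothesis genuinely transfers to $C^*$ and $D^*$.
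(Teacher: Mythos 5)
Your proposal is correct and follows essentially the same route as the paper: part (1) by transporting extreme integral generators and support forms under $\phi$ and $\phi^\vee$, part (2) by reconstructing the isomorphism from the matched evaluation matrix (the paper phrases this as an isomorphism of the monoids generated by the extreme integral generators, obtained via the standard embeddings, and then invokes Theorem~\ref{basic}), and part (3) by transposition and dualization exactly as in the paper. Your explicit construction of $\phi$ in (2) --- using that the support forms of $D$ span $(\RR^d)^*$ for well-definedness and both generation hypotheses for $\phi(\ZZ^d)=\ZZ^d$ --- is a careful unpacking of the paper's terse ``detour via the standard embeddings.''
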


\begin{proof}
(1) is obvious. For (2) we let $M$ by the monoid generated by the extreme integral generators of $C$ and $N$ the corresponding monoid for $D$. The detour via the standard embeddings shows that they are isomorphic monoids. The isomorphism extends both to an isomorphism of the cones they generate, namely $C$ and $D$, as well as to an isomorphism of the groups generated by them, which is $\ZZ^d$ in both cases. By Theorem \ref{basic} the cones $C$ and $D$ are $\ZZ$-isomorphic

Note that $\etype_\ZZ C=\etype_\ZZ D$ implies $\etype_\ZZ C^*=\etype_\ZZ D^*$ by transposition of matrices. Thus $C^*$ and $D^*$ are $\ZZ$-isomorphic by (2) and then $C$ and $D$ are $\ZZ$-isomorphic by Theorem \ref{type_iso}. This shows (3).
\end{proof}

\begin{corollary}\label{aut_ext}
Let $y_1,\dots,y_m$ be the extreme integral generators of $C$ and $\sigma_1,\dots,\sigma_s$ its support forms. Suppose that $y_1,\dots,y_m$ or $\sigma_1,\dots,\sigma_s$ generate $\ZZ^r$. Then
$\Aut_\ZZ C$ can be identified with the group of permutations $\Pi\in \cS_m$ for which there exists a permutation $\Sigma\in \cS_s$ such that 
\begin{equation}\label{GxLF2}
\langle y_{\Pi(i)}, \sigma_{\Sigma(j)}\rangle = \langle y_i, \sigma_j\rangle, \qquad i=1,\dots,m, \ j=1,\dots,s.
\end{equation} 
\end{corollary}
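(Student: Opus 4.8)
The plan is to mimic the proof of Theorem~\ref{aut_perm}, replacing the Hilbert basis of $C$ by its extreme integral generators, and to supply the one extra ingredient — that the combinatorial data of the $y_i$ and $\sigma_j$ suffice to reconstruct $\phi$ — using the hypothesis together with duality (Theorem~\ref{dual_aut}). As in the Hilbert basis case, the identification sends a $\ZZ$-automorphism $\phi$ to the permutation $\Pi$ describing its action on $y_1,\dots,y_m$; this assignment is injective since the $y_i$ span $\RR^d$, so the whole content is that the image is exactly the set of $\Pi$ admitting a partner $\Sigma$ satisfying \eqref{GxLF2}.

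For the ``only if'' direction, let $\phi\in\Aut_\ZZ C$. Being an $\RR$-automorphism of $\RR^d$ mapping $C$ onto $C$, it permutes the extreme rays of $C$; since it also maps $\ZZ^d$ onto itself, it carries the primitive lattice point on each extreme ray to the primitive lattice point on the image ray, so $\phi(y_i)=y_{\Pi(i)}$ for a unique $\Pi\in\cS_m$. By Theorem~\ref{dual_aut} we have $\phi^\vee\in\Aut_\ZZ C^*$, and the same argument applied to $C^*$ — whose extreme integral generators are $\sigma_1,\dots,\sigma_s$ — gives $\phi^\vee(\sigma_j)=\sigma_{\Sigma(j)}$ for a unique $\Sigma\in\cS_s$. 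Since the pair $(\phi,\phi^\vee)$ preserves the bilinear evaluation of $\RR^d\times(\RR^d)^*$,
$$
\langle y_{\Pi(i)},\sigma_{\Sigma(j)}\rangle=\langle \phi(y_i),\phi^\vee(\sigma_j)\rangle=\langle y_i,\sigma_j\rangle,
$$
which is \eqref{GxLF2}.

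For the ``if'' direction, first assume $y_1,\dots,y_m$ generate the lattice $\ZZ^d$, and let $(\Pi,\Sigma)$ satisfy \eqref{GxLF2}. I would define $\phi\colon\RR^d\to\RR^d$ by $\phi(y_i)=y_{\Pi(i)}$; the point to check is consistency, i.e.\ that $\sum_i a_iy_i=0$ forces $\sum_i a_iy_{\Pi(i)}=0$. This follows because $\sigma_1,\dots,\sigma_s$ span $(\RR^d)^*$ (as $C$ is full-dimensional and pointed): for each $j$, using \eqref{GxLF2},
$$
\Bigl\langle \sum_i a_iy_{\Pi(i)},\ \sigma_{\Sigma(j)}\Bigr\rangle=\sum_i a_i\langle y_i,\sigma_j\rangle=\Bigl\langle\sum_i a_iy_i,\ \sigma_j\Bigr\rangle=0,
$$
and $\sigma_{\Sigma(1)},\dots,\sigma_{\Sigma(s)}$ is again a spanning set. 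Thus $\phi$ is a well-defined linear map; it is bijective because $\Pi$ permutes the spanning set $\{y_1,\dots,y_m\}$, and it restricts to an automorphism of $\ZZ^d$ since both $\phi$ and $\phi^{-1}$ send the lattice generators $y_i$ to lattice points. Finally $\phi$ maps the set of extreme rays of $C$ onto itself, hence $\phi(C)=C$ because $C$, being pointed, is generated by its extreme rays; so $\phi\in\Aut_\ZZ C$ and it induces $\Pi$ by construction.

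It remains to treat the case in which $\sigma_1,\dots,\sigma_s$ generate the lattice. Here I would pass to $C^*$: its extreme integral generators are $\sigma_1,\dots,\sigma_s$ and, under the identification $C^{**}=C$, its support forms are $y_1,\dots,y_m$, while \eqref{GxLF2} is symmetric in the two families. The previous paragraph, applied to $C^*$ with the pair $(\Sigma,\Pi)$, produces $\psi\in\Aut_\ZZ C^*$ with $\psi(\sigma_j)=\sigma_{\Sigma(j)}$ and $\psi^\vee(y_i)=y_{\Pi(i)}$; then $\phi:=\psi^\vee\in\Aut_\ZZ C$ by Theorem~\ref{dual_aut}, and $\phi$ induces $\Pi$. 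The main obstacle is precisely this bookkeeping — keeping straight which permutation acts on which family under dualization — together with the well-definedness check for $\phi$ in the ``if'' direction; everything else is a routine transcription of the Hilbert basis argument.
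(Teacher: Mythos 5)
Your proof is correct and follows the route the paper intends: the paper derives this corollary from Proposition~\ref{etype_iso}, whose proof likewise splits into the case where the extreme integral generators span the lattice (direct construction of the linear map) and the case where the support forms do (dualization via Theorem~\ref{dual_aut}), and your well-definedness check via the spanning set $\sigma_{\Sigma(1)},\dots,\sigma_{\Sigma(s)}$ is just a hands-on version of the paper's ``detour via the standard embeddings.'' The only point you assert without writing out, $\psi^\vee(y_i)=y_{\Pi(i)}$ in the dual case, follows in one line by evaluating $\langle\psi^\vee(y_i),\psi(\sigma_j)\rangle=\langle y_i,\sigma_j\rangle$ against the spanning set $\{\sigma_{\Sigma(j)}\}$ and comparing with \eqref{GxLF2}.
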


\begin{remark}
(a) Classical examples of cones $C$ and $D$ with $\etype_\ZZ C=\etype_\ZZ D$, but $\type_\ZZ C\neq \type_\ZZ D$, can be derived from ``empty'' simplices in $\RR^3$. These have been classified by White; see \cite[2.55]{BrGu}. As an explicit case we take the cones $C$ and $D$ 	generated by the row  vectors of the following two arrays:
$$
\begin{matrix}
0&0&0&1\\
0&1&0&1\\
0&0&1&1\\
5&1&1&1\\
\end{matrix}
\qquad\qquad
\begin{matrix}
0&0&0&1\\
0&1&0&1\\
0&0&1&1\\
5&2&1&1\\
\end{matrix}
$$ 
In both cases $\etype_\ZZ$ is $5E_4$ where $E_4$ is the $4\times 4$ unit matrix. But for $C$ (on the left) the Hilbert basis elements have two pairs of equal values under the $4$ support forms, whereas the Hilbert basis elements of $D$ have values $1,2,3,4$ under the support forms. Moreover, $\Aut_\ZZ C$ is the dihedral group $D_4$ of order $8$,  whereas $\Aut_\ZZ D\iso \ZZ_4$.

(b) The assumptions about $C$ and $C^*$ in Corollary \ref{aut_ext} are not equivalent: it is possible that $\sigma_1,\dots,\sigma_s$ generate $\ZZ^r$, whereas $y_1,\dots,y_m$ fail to do this (or vice versa). As a simple example one can take $C$ with the extreme rays $(0,0,1)$, $(0,1,1)$, $(2,0,1)$ and $(2,1,1)$.
\end{remark}

\subsection{Another approach to normal forms and automorphism groups}\label{another}

There is another approach to isomorphism classes and automorphism groups introduced by Bremner et al.\ in \cite{Bremner}. Let $v_1,\dots,v_n$ be vectors in $\RR^d$ generating $\RR^d$ as a vector space. For $i=1,\dots,n$ we form the symmetric $d\times d$-matrix given by
$$
(M_i)_{jk}=v_{ij}v_{ik},\qquad j,k=1,\dots,d,
$$
and set
$$
Q=\sum_{i=1}^{n} M_i. 
$$
It is not hard to see that $Q$ defines a positively definite quadratic form on $\RR^d$. Let $R=Q^{-1}$, and set
\begin{equation}
w_{ij}= v_i^TRv_j\label{Rdef}
\end{equation}
where we consider vectors as $d\times 1$-matrices and $T$ denotes transposition. By \cite[Prop. 3.1]{Bremner} the automorphisms of $\RR^d$ that permute $v_1,\dots,v_n$ correspond bijectively to the permutations of $\Pi\in \cS_n$ that satisfy
\begin{equation}\label{Gens1}
w_{ij}=w_{\Pi(i),\Pi(j)},\qquad i,j=1,\dots,n.
\end{equation}
This approach can be applied to extreme integral generators or the Hilbert basis. 

\subsection{Graded cones}\label{grading}

For enumerative tasks one often has to consider cones with a grading, i.e., an integral linear form $\gamma$ on $\RR^d$ such that $\gamma(x)>0$ for all $x\in C$, $x\ne 0$. Then only isomorphisms or automorphisms are of interest that respect the grading. The matrices $S$ and $E$ in \eqref{type} and \eqref{etype} get an extra column which is required to be left fix by passage to the canonical form of the matrix. In other words, it must be left fix by the  column permutations in Corollary \ref{aut_perm} and Corollary \ref{aut_ext}.

If the computation of isomorphisms and automorphisms exchanges $C$ and $C^*$, then the extra columns become extra rows, presenting fixed points, to be left invariant in the computation of canonical forms or the permutation groups  and the row permutations in Corollary \ref{aut_perm} and Corollary \ref{aut_ext}.

The grading is a \emph{special linear form} on $C$ and a \emph{special generator} of $C^*$.

\subsection{Rational polyhedra}

As pointed out in Section \ref{prelim}, computations for polyhedra $P\subset\RR^{d}$ are done in $\cone(P)\subset\RR^{d+1}$ and then restricted to the hyperplane $\cH(P)$ on which the dehomogenization $\delta$ has value $1$. This principle is applied to isomorphisms and automorphisms as well: we must additionally require that the automorphisms and isomorphisms leave the hyperplane $\cH(P)$ stable.

The dehomogenization is treated in the same way as the grading in Section \ref{grading}: it is a special linear form on $C$ and a special generator of $C^*$.

Additionally we may have a grading: an integral linear form on $\RR^d$ defines a grading on the polyhedron $P$ if it takes only positive values on the nonzero elements of $\rec P$. If we want to compute isomorphisms or automorphisms of graded polyhedra, then we must work with two special linear forms or generators.

\section{Other types of automorphisms}\label{other} 

\subsection{Rational and algebraic automorphisms}

Let $C\subset \RR^d$ be a cone. The automorphism group of $C$, i.e., the group of all $\RR$-linear automorphisms of $\RR C$ that map $C$ onto itself, is not finite, unless $C=0$. In fact, $\RR_+\subset \Aut_\RR C$ in a natural way. This observation remains true if we replace $\RR$ by a subfield $K$, such as $\QQ$ or a real algebraic number field, and $C$ by a cone generated by vectors with coordinates in $K$. In particular, $\Aut_K C$ cannot be computed as a subgroup of a finite group of permutations. (For a discussion of the full group of automorphisms we refer the reader to \cite{Bremner}.)

There is no natural replacement of the extreme integral generators if discrete structures are not involved. However, if $C$ is the cone over a polytope, then every extreme ray of $C$ has a distinguished point, namely the vertex of the polytope that is contained in it. This allows us to compute the $K$-automorphisms of a polytope, provided we can compute in $K$, and this is the case if $K=\QQ$ or $K$ is a real algebraic number field. We mention algebraic number fields since Normaliz can compute in them.

\subsection{Euclidean automorphisms}

A \emph{Euclidean automorphism} (or rigid motion) of a cone $C$ is a distance preserving (necessarily linear) automorphism of $\RR C$ that maps $C$ onto itself. In this case there is a distinguished set of points on the extreme rays, namely the points of Euclidean distance $1$ from the origin: a Euclidean automorphism of $C$ permutes them, and is uniquely determined by this permutation. But now a new difficulty arises: the points of distance $1$ usually have coordinates outside $\QQ$ or the given algebraic number field. One would have to adjoin a potentially large number of square roots in order to make the points of distance $1$ defined over the field of reference.

When considering polytopes we must be careful: a Euclidean motion of the hyperplane $\cH(P)$, does in general not extend to a Euclidean automorphism of the cone over the polytope. But this is not an obstruction to computability: we must find all rational automorphisms of the polytope that preserve the norms 
$$
\Vert v_i-v_j\Vert, \qquad i,j=1,\dots,n,
$$
for the vertices $v_1,\dots,v_n$ of our polytope. In order to stay in the field of rational numbers or in an algebraic number field, it is better to use the squares: we search all permutations $\Pi\in \cS_n$ that satisfy 
\begin{equation}\label{Gens2}
\Vert v_i-v_j\Vert^2 = \Vert v_{\Pi(i)}-v_{\Pi(j)}\Vert^2, \qquad i,j=1,\dots,n.
\end{equation}
Note that the hyperplane of the polytope is automatically preserved if we permute the vertices of the polytope.

There is one critical aspect regarding coordinate transformations: they must preserve Euclidean distances if Euclidean automorphisms are to be computed. This cannot be guaranteed  by Normaliz' coordinate transformations. Therefore Euclidean automorphism groups can only be computed if the input defines a polytope on the nose. The passage to a quotient makes no sense.

\subsection{Combinatorial automorphisms}

The combinatorial automorphisms of a polyhedron are the bijective maps of the set of faces to itself that preserve the partial order by inclusion. This is an abuse of terminology since combinatorial automorphisms of $P$ need not be automorphisms of $P$. It is not hard to see that the combinatorial automorphisms of a polyhedron $P$ can be identified with the combinatorial automorphisms of $\cC(P)$ that restrict to combinatorial automorphisms of the recession cone and therefore permute the faces of the polyhedron.

Every face of a polyhedron is the intersection of the facets in which it is contained, and every face of a pointed cone is spanned by the extreme rays in it. Let $x_1,\dots,x_n$ be the extreme rays of $C$ and $F_1,\dots,F_s$ its facets. Then we set $\delta_{ij}=1$ if $x_i\in F_j$, and $\delta_{ij}=0$ else. For the group of combinatorial automorphisms we must find all permutations $\Pi\in\cS_n$ for which there exists a permutation $\Sigma\in \cS_s$ such that
\begin{equation}\label{GxLF3}
\delta_{\Pi(i),\Sigma(j)}= \delta_{ij}, \qquad i=1,\dots,n, \ j=1,\dots,s,
\end{equation}
with the additional requirement in the case of a polyhedron that the incidence of extreme rays with $\cH(P)$ is preserved by $\Pi$. (It makes no sense for combinatorial automorphisms to respect the grading.)

\subsection{Non-intrinsically defined automorphisms}

It is often desirable to compute an automorphism group, or at least a subgroup, from partial information about the cone or polyhedron, namely using only the defining data. Usually these are extreme rays or, dually, support forms of the cone (over the polyhedron), potentially augmented by a grading. Normaliz therefore has a computation goal ``input automorphisms''. These are rational automorphisms preserving the set of generators, and additionally the grading, the dehomogenization or both. Before computing the input automorphisms, Normaliz prepares the input data as far as possible without the dualization of the cone. Then we are exactly in the situation of Section \ref{another}. With notation introduced there, we must find all permutations of the input vectors $v_1,\dots,v_n$ that satisfy the condition
\begin{equation}\label{Gens3}
w_{ij}=w_{\Pi(i),\Pi(j)},\qquad i,j=1,\dots,n.
\end{equation}
where $w_{ij}= v_i^TRv_j$.

An even coarser type is presented by ``ambient automorphisms'', namely coordinate \emph{permutations} of the ambient space that  preserve the input data. Suppose $C$ is defined by vectors $v_1,\dots,v_n\in\RR^d$. Then we search all coordinate permutations $\Pi\in \cS_n$ for which there is a  permutation $\Sigma\in \cS_d$  such that
\begin{equation}\label{GxLF4}
v_{\Pi(i),\Sigma(j)}=v_{ij},  \qquad i=1,\dots,n,\ j=1,\dots,d.
\end{equation} 
Additionally we require that the dehomogenization or the grading or both  are preserved.

Let $G$ be the group of ambient automorphisms. The elements of $G$ induce integral as well as Euclidean automorphisms of the cone or polyhedron, but in general the natural map from $G$ to the group $\Aut_\ZZ C$ is not surjective, and even injectivity is not guaranteed. 

\section{Implementation in Normaliz}\label{impl}

The automorphism groups that can be computed by Normaliz are realized as permutation groups. There are two types:
\begin{enumerate}
\item pairs of permutations of vectors and linear forms preserving the evaluation of the canonical bilinear form on $\RR^d\times (\RR^d)^*$, as in \eqref{GxLF1}, \eqref{GxLF2}, \eqref{GxLF3}, \eqref{GxLF4};
\item permutations of vectors preserving the evaluation of a symmetric bilinear form on $\RR^d$, as in  \eqref{Gens1}, \eqref{Gens2}, \eqref{Gens3}.
\end{enumerate}

Both problems can be considered as the computation of the automorphism group of a weighted graph:
\begin{enumerate}
\item For (1) above we choose the complete bipartite graph whose vertices are presented by the vectors in the first partition and the linear forms in the second. The weight associated to the edge connecting a vector $v$ and a linear form $\lambda$ is $\lambda(v)$.

\item For (2) we choose the complete graph whose vertices are presented by the vectors, augmented by edges connecting a vertex to itself. The weight of the edge connecting vectors $v$ and $w$ is $\beta(v,w)$ where $\beta$ denotes the symmetric bilinear form.
\end{enumerate}

The package \verb|nauty| by McKay and Piperno computes automorphism groups and canonical forms of undirected graphs. For the application to more general weighted graphs, these are replaced by a tower of  graphs with weights $0$ and $1$ only. The encoding is described in the nauty manual.

The ``raw'' weights $\lambda(v)$ and $\beta(v,w)$ above can be very large numbers. This is often true for the evaluation of the bilinear form defined by the matrix $R$ appearing in \eqref{Rdef}. Therefore it is better to sort the weights first and to replace each weight by its index in the sorting order, keeping the sorted weights in an extra vector. 

nauty allows the subdivision of the vertices of the graph into partitions left stable under the permutations that represent the automorphisms. This subdivision is not only necessary to separate the layers in the tower, but also for our bipartite graphs, and the special linear forms and vectors that must be accommodated as well. Each of the latter constitutes a partition in every layer so that they are fixed by the automorphisms.

nauty not only computes the automorphism group as a permutation group, but also returns the orbits of the vertices of the graph that we can interpret as permutations of vectors and/or linear forms, for example as permutations of extreme integral generators and support forms. Another important result is the canonical order of the vertices, on which we can base the computation of isomorphism types.  

The computation goals of Normaliz are selected by options in the input file or on the command line. The goal  \verb|Automorphisms| asks for integral automorphisms if the input is defined over the rational numbers and for algebraic automorphisms in the case of real algebraic number fields. All other types of automorphism groups are named exactly as in this note.

Only the computation of integral automorphisms may require several attempts. Normaliz first tries the computation based only on the extreme rays or the support forms, choosing the smaller cardinality. The matrices of the generators of the automorphism group are then checked for being defined over $\ZZ$ and having determinant $\pm 1$. If this is not the case, the other set of ``generators'' is used, and if this fails as well, then the Hilbert basis is computed.

\section{Two classical examples}\label{examples}
We illustrate the computational potential of Normaliz by two classical examples, the icosahedron and the linear ordering polytope.

\subsection{The icosahedron}
The icosahedron is one of the Platonic solids, a regular polytope of dimension $3$ defined over $\QQ[\sqrt 5]$, with $12$ vertices and $20$ facets. We compute the Euclidean automorphisms and get the following output:
\begin{footnotesize}
\begin{verbatim}
Euclidean automorphism group of order 120 (possibly approximation if very large)
Integrality not known
************************************************************************
3 permutations of 12 vertices of polyhedron

Perm 1: 1 2 4 3 7 8 5 6 10 9 11 12
Perm 2: 1 3 2 5 4 6 7 9 8 11 10 12
Perm 3: 2 1 3 4 6 5 8 7 9 10 12 11

Cycle decompositions 

Perm 1: (3 4) (5 7) (6 8) (9 10) --
Perm 2: (2 3) (4 5) (8 9) (10 11) --
Perm 3: (1 2) (5 6) (7 8) (11 12) --

1 orbits of vertices of polyhedron

Orbit 1 , length 12:  1 2 3 4 5 6 7 8 9 10 11 12

************************************************************************
3 permutations of 20 support hyperplanes

Perm 1: 2 1 5 6 3 4 7 8 11 12 9 10 13 14 17 18 15 16 20 19
...

Cycle decompositions 

Perm 1: (1 2) (3 5) (4 6) (9 11) (10 12) (15 17) (16 18) (19 20) --
...

1 orbits of support hyperplanes

Orbit 1 , length 20:  1 2 3 4 5 6 7 8 9 10 11 12 13 14 15 16 17 18 19 20
\end{verbatim}
\end{footnotesize}
The permutations represent a system of generators of the automorphism group. It is easy to see that the group of combinatorial automorphisms cannot have order $>120$. Therefore all intrinsically defined automorphism groups of this classical polyhedron have the same order $120$.

The $120$-cell and the $600$-cell, regular polytopes of dimension $4$ and both defined over $\QQ[\sqrt 5]$, have the Coxeter group $H_4$ as their Euclidean automorphism group, and at least the order $14,400$ can be verified by Normaliz.

\subsection{The linear ordering polytope} \def\LO{\operatorname{LO}} 
Let $I=\{1,\dots,n\}$, $n\ge 3$. (The case $n=2$ is trivial and special.) A relation $R$ on $I$ is a subset of $I\times I$, and $R$ can be encoded by its incidence vector: it is a $0$-$1$-vector $\iota_R$ whose components are labeled by the elements $(x,y)$ of $I\times I$, and $\iota_R(x,y)=1$ if $(x,y)\in R$ and $\iota_R(x,y)=0$ else. The \emph{linear  ordering polytope} $\LO_n$ is the convex hull of the incidence vectors of the linear (or total) orders on $I$. It has been explored extensively; for example, see the book \cite{MaRe} by  Mart\'i and Reinelt.

The group $\cS_n$ of permutations of $I$ acts on the set of all incidence vectors by \emph{relabeling}:  $\iota_{\pi(R)}(x,y)=\iota_R(\pi(x),\pi(y))$. This action is a permutation of coordinates that restricts to the linear ordering polytope. There is a further permutation of coordinates with this property, the \emph{duality}: $\iota_{R^\vee}(x,y) = \iota_R(y,x)$. Evidently dualization is different from all relabelings and commutes with them. Therefore $\Aut_\ZZ \LO_n$ contains a copy of $\ZZ_2\times \cS_n$. All these automorphisms are Euclidean as well.

Expecting that $\ZZ_2\times \cS_6$ is the full group of automorphisms of $\LO_6$, we apply Normaliz to it---and get a surprising result: the group of integral automorphisms has order $10,080 = 2|\cS_{7}|$. It does not change if ``integral'' is replaced by the more general ``rational'' or ``combinatorial''. But if we replace it by ``Euclidean'', then the order goes down to $1,440 = 2|\cS_6|$, and the computation confirms that above we have found all Euclidean automorphisms of $\LO_6$. Computations for $n=3, 4, 5, 7$ show the same pattern.

It is actually known that the group of combinatorial automorphisms of $\LO_n$ is isomorphic to $\ZZ_2\times \cS_{n+1}$. See Fiorini \cite{Fio}. We complement this result by proving that $\ZZ_2\times \cS_n$ is the Euclidean automorphism group. To this end we must understand the action of $\ZZ_2\times \cS_{n+1}$ on $\LO_n$. We  follow Katthän \cite{Katth}, using his conventions and notation.

The first step is a change of the ambient space: an order relation $R$ is completely determined by the pairs $(x,y)\in R$, $x<y$. The corresponding orthogonal projection of $\RR^{n^2}$ to $\RR^{\binom{n}{2}}$ maps the linear ordering polytope to an integrally isomorphic copy, and even the Euclidean structure remains unchanged: all distances between vertices change by the factor $1/\sqrt 2$. We identify $\LO_n$ with its projection to $\RR^{\binom{n}{2}}$. It is easy to check that the vertices of $\LO_n$ generate the full lattice $\ZZ^{\binom{n}{2}}$, as an affine lattice as well as a subgroup of $\ZZ^{\binom{n}{2}}$.

To make relabeling and dualization linear---so far they are only affine linear---we apply the affine linear bijective map
$$
\Psi:\RR^{\binom{n}{2}} \to \RR^{\binom{n}{2}},\qquad \Psi(v) =2v - \mathbf{1},
$$
where $\mathbf{1}$ is the vector with all coordinates equal to $1$. Moreover we set $\tilde\LO_n=\Psi(\LO_n)$.

\begin{lemma}\label{tilde}
Lat $\phi$ be a map of $\RR^{\binom{n}{2}}$ to itself, and set $\tilde\phi=\Psi\phi\Psi^{-1}$.
\begin{enumerate}
\item $\phi$ is a Euclidean automorphism of $\LO_n$ if and only $\tilde\phi$ is a Euclidean automorphism of $\tilde\LO_n$.
\item $\phi(\mathbf 1) = \mathbf 1\iff\tilde\phi(\mathbf 1)=\mathbf 1$.
\item Suppose that $\tilde\phi$ is a Euclidean automorphism of $\tilde\LO_n$ with  $\tilde\phi(\mathbf 1)=\mathbf 1$. Then $\tilde\phi$ permutes the unit vectors in $\RR^{\binom{n}{2}}$.
\end{enumerate}
\end{lemma}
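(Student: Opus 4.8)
The plan is to treat the three parts in order, with parts (1) and (2) being essentially formal consequences of the definition of $\Psi$, and part (3) carrying the geometric content. First I would establish (1). The map $\Psi(v) = 2v - \mathbf 1$ is an affine linear bijection which is the composition of the homothety $v \mapsto 2v$ (a Euclidean similarity, scaling all distances by the fixed factor $2$) and a translation (an isometry). Hence for any two points $p,q$ one has $\Vert \Psi(p)-\Psi(q)\Vert = 2\Vert p-q\Vert$. Consequently, a permutation of the vertices of $\LO_n$ preserves all squared vertex distances if and only if the corresponding permutation of the vertices of $\tilde\LO_n = \Psi(\LO_n)$ does, and since by the criterion \eqref{Gens2} a Euclidean automorphism of a polytope is exactly a permutation of its vertices preserving these distances, the equivalence in (1) follows once one checks that $\phi \mapsto \tilde\phi = \Psi\phi\Psi^{-1}$ sends the affine map realizing the vertex permutation for $\LO_n$ to the affine map realizing the matching permutation for $\tilde\LO_n$. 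This is immediate from $\tilde\phi(\Psi(v)) = \Psi(\phi(v))$.

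For (2) I would simply compute the fixed behaviour of $\mathbf 1$ under conjugation by $\Psi$. One has $\Psi(\mathbf 1) = 2\cdot\mathbf 1 - \mathbf 1 = \mathbf 1$, i.e.\ $\mathbf 1$ is the fixed point of the affine map $\Psi$. Therefore $\tilde\phi(\mathbf 1) = \Psi\phi\Psi^{-1}(\mathbf 1) = \Psi\phi(\mathbf 1)$, which equals $\mathbf 1 = \Psi(\mathbf 1)$ precisely when $\phi(\mathbf 1) = \mathbf 1$ (using that $\Psi$ is injective). This proves the equivalence $\phi(\mathbf 1) = \mathbf 1 \iff \tilde\phi(\mathbf 1) = \mathbf 1$.

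The substantive part is (3). Here I would use the explicit description of the vertices of $\tilde\LO_n$. A vertex of $\LO_n$ is a $0$-$1$ vector $\iota_R$ with $R$ a linear order (restricted, after the projection to $\RR^{\binom n2}$, to coordinates indexed by pairs $x<y$), so its image under $\Psi$ is a $\pm 1$ vector, i.e.\ a vertex of $\tilde\LO_n$ lies on the cube $\{-1,1\}^{\binom n2}$ and in particular has squared norm exactly $\binom n2$, the same for every vertex. The point $\mathbf 1$ is itself the vertex coming from the natural order $1 < 2 < \dots < n$. A Euclidean automorphism $\tilde\phi$ fixing $\mathbf 1$ is a linear orthogonal map (it fixes a point and the barycenter, hence fixes the origin after we note the centroid of $\tilde\LO_n$ is the origin by the duality symmetry — or more directly, any isometry of $\RR^{\binom n2}$ fixing $\mathbf 1$ and permuting a spanning set of vectors of equal norm is linear). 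The unit vectors $e_\alpha$, $\alpha$ a pair $x<y$, are characterized among lattice points as follows: $\mathbf 1 - 2e_\alpha$ is the vertex of $\tilde\LO_n$ obtained from the natural order by the single transposition swapping the covering pair corresponding to $\alpha$ — equivalently, $e_\alpha$ is (half) the difference between $\mathbf 1$ and an adjacent vertex of $\tilde\LO_n$ on the edge realizing an elementary swap. The key point is to identify, intrinsically in terms of Euclidean data at the vertex $\mathbf 1$, which vertices $w$ of $\tilde\LO_n$ satisfy $w = \mathbf 1 - 2e_\alpha$: these are exactly the vertices at minimal distance $2$ from $\mathbf 1$, and such a vertex differs from $\mathbf 1$ in exactly one coordinate. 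Since $\tilde\phi$ is an isometry fixing $\mathbf 1$, it permutes the set of vertices at distance $2$ from $\mathbf 1$, hence permutes the set $\{\mathbf 1 - 2e_\alpha\}$, hence permutes $\{e_\alpha\}$.

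The hard part will be justifying that the vertices of $\tilde\LO_n$ at minimal distance from $\mathbf 1$ are precisely the $\binom n2$ vertices of the form $\mathbf 1 - 2e_\alpha$ (and not, say, vertices differing from $\mathbf 1$ in two or more coordinates at the same Euclidean distance): this requires knowing that a single "adjacent-transposition" move changes exactly one of the $x<y$ coordinates — true because swapping $i$ and $i+1$ in a linear order flips only the relative order of the pair $\{i,i+1\}$ — and that no linear order differs from the natural order in exactly one of the pair-coordinates unless that pair is a consecutive pair, together with a counting/extremality check that these give the unique nearest vertices. I expect this combinatorial identification of the near-neighbours of the distinguished vertex $\mathbf 1$, following Katthän's conventions, to be where the real work lies; parts (1) and (2) are bookkeeping about $\Psi$.
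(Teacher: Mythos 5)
Your parts (1) and (2) are correct and are exactly what the paper leaves to the reader as ``obvious''. The argument for (3), however, has a genuine gap at its central step. The vertices of $\widetilde{\operatorname{LO}}_n$ at minimal positive distance from $\mathbf 1$ are the images of the linear orders with exactly one inversion, i.e.\ the $n-1$ adjacent transpositions of the natural order; they are $\mathbf 1 - 2e_{\{i,i+1\}}$ for $i=1,\dots,n-1$, and \emph{only} these --- as you yourself note, a linear order differing from the natural one in exactly one pair-coordinate must invert a consecutive pair, so $\mathbf 1-2e_\alpha$ is not a vertex for non-consecutive $\alpha$. Consequently your nearest-vertex argument shows only that $\tilde\phi$ permutes the $n-1$ unit vectors indexed by consecutive pairs, not all $\binom{n}{2}$ unit vectors as the lemma asserts. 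An orthogonal map permuting those $n-1$ unit vectors may still act by an arbitrary orthogonal transformation on the complementary $\bigl(\binom{n}{2}-(n-1)\bigr)$-dimensional coordinate subspace, so the conclusion does not follow. One could try to continue inductively through the distance strata (vertices with $2,3,\dots$ inversions), but that is substantial additional combinatorial work which your sketch does not carry out, and it is not how the paper proceeds.

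The paper closes this step by an integrality argument instead. Working on the $\operatorname{LO}_n$ side, it shows $\phi(0)=0$ because $0$ is the unique vertex of $\operatorname{LO}_n$ at \emph{maximal} distance from $\mathbf 1$ (your barycenter argument for linearity is an acceptable substitute here), so $\phi$ is linear and orthogonal. Since the vertices of $\operatorname{LO}_n$ generate the lattice $\ZZ^{\binom{n}{2}}$, the map $\phi$ and its inverse are integral. An integral orthogonal matrix is a signed permutation matrix (each column is an integer vector of Euclidean norm $1$, hence $\pm$ a unit vector), and fixing $\mathbf 1$ forces all signs to be $+1$; transferring back through $\Psi$ gives the statement for $\tilde\phi$. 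You should replace the nearest-vertex identification with this lattice argument, or else supply the missing induction over inversion numbers.
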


\begin{proof}
(1) and (2) are obvious. For (3) we start from $\tilde\phi$ and set $\phi=\Psi^{-1}\tilde\phi=\Psi$. Then $\phi$ is a Euclidean automorphism of $\LO_n$ with $\phi(\mathbf 1)=\mathbf 1$ by (1) and (2). Moreover, $\phi(0)=0$ since $0$ is the uniquely determined vertex of $\LO_n$ with maximum distance from $\mathbf 1$. Hence $\phi$ is linear (and not only affine linear). Clearly $\phi$ extends to a Euclidean automorphism of $\RR^{\binom{n}{2}}$, and it is integral because the vertices of $\LO_n$ generate the lattice $\ZZ^{\binom{n}{2}}$. An integral Euclidean automorphism permutes the unit vectors up to sign, and if it fixes $\mathbf 1$, it must permute the unit vectors themselves. That also $\tilde\phi$ permutes the unit vectors is now an easy computation.
\end{proof}

The next transition is from orderings to permutations. For $\pi\in \cS_n$ we define the relation $R_\pi$ by $(x,y)\in R_\pi$ if and only if $\pi(x)>\pi(y)$. It follows that the polytope $\tilde\LO_n$ spanned by the vectors
$$
(\tilde k_{ij}(\pi): 1\le i < j\le n), \qquad \pi\in \cS_n,
$$
where
$$
\tilde k_{ij}(\pi)=\begin{cases}
1&\text{if }\pi(i)>\pi(j),\\
-1&\text{if }\pi(i)<\pi(j).
\end{cases}
$$
Setting $\tilde k_{ij}=-\tilde k_{ij}$ for $i>j$ yields an easy description of relabeling by a permutation $\sigma\in \cS_n$: it amounts to the transformation 
$$
\sigma(\tilde k_{ij}) = \tilde k_{\sigma(i)\sigma(j)}.
$$
Dualization is multiplication by $-1$.

Relabeling makes $\cS_n$ act on the vector subspace $W$ of the space of real valued functions on $\cS_n$ generated by the functions $\tilde k_{ij}$. Katthän's crucial insight is to identify $W$ (which is $\tilde U_{Inv}$ in \cite{Katth}) with the second exterior power $\bigwedge^2 V$ of $V=\RR^n$ and to observe that $W$ and $\bigwedge^2 V$ are isomorphic representations of the group $\cS_n$ if we let $\cS_n$ act on $V$ by the relabeling of the unit vectors, $\sigma(e_i)=e_{\sigma(i)}$, and extend this action to $\bigwedge^2 V$ in the natural way. Under the action of $\cS_n$, the vector space $V$ is not irreducible: it has a $1$-dimensional subspace of invariants spanned by $e_1+\dots+e_n$. The complementary $\cS_n$-subspace is the orthogonal complement of the subspace of invariants since the action of $\cS_n$ preserves the standard scalar product. In it we choose the basis $e_i-e_n$, $i=1,\dots,n-1$. The splitting of $V$ induces a splitting of $\bigwedge^2 V$ into two summands, one of which has the basis $(e_i-e_n)\wedge (e_j-e_n)=e_i\wedge e_j - e_i\wedge e_n + e_j\wedge e_n$, $1\le i < j< n$. The transfer to $W$ shows that the subspace generated by the functions
$$
\tilde w_{ij}=\tilde k_{ij}-\tilde k_{in}+\tilde k_{jn} , \qquad 1\le i < j <n,
$$
is closed under relabeling (and it is obviously closed under dualization). 

Let $P'$ be the polytope spanned by the vectors
$$
(\tilde w_{ij}(\pi): 1\le i < j<n), \qquad \pi\in \cS_n.
$$
One checks that $\tilde w_{ij}(\zeta \pi) = \tilde w_{ij}(\pi)$ for the cyclic permutation $\zeta$, $\zeta(k) = k+1 \mod n$. Since $\zeta^m\pi(n)=n$ for suitable $m$ and the subgroup of $\cS_n$ formed by the permutation fixing $n$ can be identified with $\cS_{n-1}$, one gets the desired identification of $P'$ with $\tilde\LO_{n-1}$ plus a description of the action of $\cS_n$ on $\tilde\LO_{n-1}$.

\begin{theorem}
For all $n\ge 3$ the group of Euclidean automorphisms of $\tilde\LO_n$ is $\ZZ_2\times \cS_n$ acting by relabeling and dualization.
\end{theorem}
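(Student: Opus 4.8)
The plan is to establish the reverse inclusion $G_n\subseteq\ZZ_2\times\cS_n$, where $G_n$ denotes the Euclidean automorphism group of $\tilde\LO_n$ and the inclusion $\ZZ_2\times\cS_n\subseteq G_n$ is already known. Two reductions come first. Since dualization acts on $\tilde\LO_n$ as $-\id$ and belongs to $G_n$, the centroid $c$ of $\tilde\LO_n$ satisfies $-c=c$, so $c=0$; as every Euclidean automorphism fixes the centroid, each $\phi\in G_n$ is linear (and orthogonal). Relabeling by $\sigma$ sends the vertex $\tilde k(\pi)$ to $\tilde k(\pi\sigma)$, so the relabeling subgroup, a copy of $\cS_n$, acts simply transitively on the vertex set; composing a given $\phi$ with a suitable relabeling we may assume $\phi(\mathbf 1)=\mathbf 1$, where $\mathbf 1=\tilde k(w_0)$ is the all-ones vertex and $w_0\in\cS_n$ is the order reversal $w_0(k)=n+1-k$. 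Hence $G_n=\cS_n\cdot\operatorname{Stab}_{G_n}(\mathbf 1)$, and since $\id$ and $-\,(\text{relabeling by }w_0)$ both fix $\mathbf 1$ and together with $\cS_n$ generate precisely $\ZZ_2\times\cS_n$ (the central sign $-\id$ being dualization), it suffices to prove $\operatorname{Stab}_{G_n}(\mathbf 1)=\{\id,\,-\,(\text{relabeling by }w_0)\}$. By Lemma \ref{tilde}(3) any $\phi\in\operatorname{Stab}_{G_n}(\mathbf 1)$ is the linear map permuting the coordinate unit vectors $e_{ij}$ ($1\le i<j\le n$) without signs; write $\phi(e_{ij})=e_{\tau\{i,j\}}$ for a permutation $\tau$ of the set of $\binom n2$ pairs.

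The core of the proof is to translate ``$\phi$ maps vertices to vertices'' into a condition on $\tau$. The vertices of $\tilde\LO_n$ are exactly the $\pm1$-vectors $v$ obeying the transitivity conditions: for every triple $a<b<c$ the pattern $(v_{ab},v_{ac},v_{bc})$ is neither $(+,-,+)$ nor $(-,+,-)$, equivalently one never has $v_{ab}=v_{bc}\ne v_{ac}$ --- and here the ``long'' pair $\{a,c\}$, the middle slot, is distinguished. One needs the elementary realizability fact that any sign assignment to three pairs that do \emph{not} form the three $2$-subsets of some $3$-element set extends to a linear order on $[n]$ (orient the associated forest of edges and take a topological sort), and likewise that any \emph{non}-forbidden assignment to the three $2$-subsets of such a set extends. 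Feeding this into the requirement that $\phi$ and $\phi^{-1}$ preserve the vertex set forces: for every $3$-element set $\{a,b,c\}$ the pairs $\tau\{a,b\},\tau\{a,c\},\tau\{b,c\}$ are again the three $2$-subsets of some $3$-element set $\{a',b',c'\}$, and moreover $\tau$ carries the long pair $\{a,c\}$ to the long pair $\{a',c'\}$. (If the first part failed for some triple, a forbidden pattern on $\tau\{a,b\},\tau\{a,c\},\tau\{b,c\}$ would be realized by a genuine vertex, whose $\phi$-preimage would then violate transitivity; if only the long-pair part failed, the offending pattern is still non-forbidden for $\{a',b',c'\}$, hence realized, with the same conclusion.)

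It remains to identify the admissible $\tau$. The first condition says $\tau$ is an automorphism of the triangular graph $T_n$ (the line graph of $K_n$); by Whitney's theorem $\operatorname{Aut}(T_n)=\cS_n$ for $n\ge3$, $n\ne4$, while for $n=4$ the one extra automorphism of $T_4\cong K_{2,2,2}$, complementation $\{i,j\}\mapsto[4]\setminus\{i,j\}$, turns a triangle of pairs into a star and is therefore excluded by our condition. So $\tau$ is induced by a bijection $\bar\tau$ of $[n]$, $\tau\{i,j\}=\{\bar\tau(i),\bar\tau(j)\}$. The long-pair condition now reads: for all $a<b<c$, $\bar\tau(b)$ lies strictly between $\bar\tau(a)$ and $\bar\tau(c)$. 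A short induction (comparing $\bar\tau(1)$ with $\bar\tau(n)$: every other value lies strictly between them, so $\{\bar\tau(1),\bar\tau(n)\}=\{1,n\}$, then recurse on $\{2,\dots,n-1\}$) shows that $\bar\tau$ must be monotone, i.e.\ $\bar\tau=\id$ or $\bar\tau=w_0$. Correspondingly $\phi=\id$ or $\phi$ is the unsigned permutation $v_{ij}\mapsto v_{\{n+1-i,\,n+1-j\}}$, which equals $-\,(\text{relabeling by }w_0)$, i.e.\ dualization followed by a relabeling. This determines $\operatorname{Stab}_{G_n}(\mathbf 1)$ and completes the argument.

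The step I expect to be the main obstacle is the translation in the second paragraph, and within it the long-pair part: it is precisely what distinguishes this Euclidean statement from the strictly larger combinatorial automorphism group $\ZZ_2\times\cS_{n+1}$, so one has to argue carefully that a $\tau$ which confuses long and short pairs inside some triangle cannot preserve the vertex set. This in turn rests on the realizability lemma for partial sign assignments, which is routine but has to be checked case by case (the three pairs forming a path, a star, or a disjoint union). The remaining ingredients --- linearity via the centroid, reduction to $\operatorname{Stab}_{G_n}(\mathbf 1)$, the appeals to Lemma \ref{tilde}(3) and to Whitney's theorem, and the final monotonicity remark about permutations of $[n]$ --- are comparatively mechanical.
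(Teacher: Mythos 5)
Your proof is correct, but it takes a genuinely different route from the paper's. The paper bounds the Euclidean group from above by Fiorini's theorem ($\Aut_{\textup{comb}}\LO_n\cong\ZZ_2\times\cS_{n+1}$) and then uses Katthän's identification of $\tilde\LO_n$ with the polytope $P'$ sitting inside an $\cS_{n+1}$-summand of $\bigwedge^2\RR^{n+1}$ to check that the extra elements of $\cS_{n+1}$, after being forced by Lemma \ref{tilde} to act as coordinate permutations, fail to permute the coordinate functions $\tilde w_{ij}$. You share the paper's opening reductions (pass to the stabilizer of $\mathbf 1$ and invoke Lemma \ref{tilde}(3) to get an unsigned coordinate permutation $\tau$ of the pairs), but from there you classify the admissible $\tau$ from scratch: the transitive-tournament description of the vertex set, together with the realizability of sign patterns on non-triangles (forests) and of non-forbidden patterns on triangles, forces $\tau$ to preserve triangles of $T_n=L(K_n)$ and, within each triangle, the long pair; Whitney's theorem (with the $n=4$ complementation automorphism correctly excluded by the triangle condition) then makes $\tau$ induced by some $\bar\tau\in\cS_n$, and the betweenness condition makes $\bar\tau$ monotone. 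The trade-off is clear: the paper's argument is short because it outsources the hard combinatorics to \cite{Fio} and \cite{Katth}, and it illuminates the hidden $\cS_{n+1}$-action; yours is self-contained and independent of both references, at the price of the case analysis on realizable patterns and the appeal to Whitney. One small presentational point: your final identification of the $w_0$-induced coordinate permutation with $-(\text{relabeling by }w_0)$ need not even be verified directly, since the stabilizer is shown to have at most two elements and already contains two distinct known ones; but your sign computation is in any case consistent with the convention $\tilde k_{ij}=-\tilde k_{ji}$.
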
 

\begin{proof} 
We know already that $\ZZ_2\times \cS_n$ acts by Euclidean automorphisms. Assume that $\phi\in\ZZ_2\times \cS_{n+1}$ with $\phi\notin \ZZ_2\times\cS_n$, does this as well. Since dualization is the point reflection at the midpoint of $\LO_n$, we can assume $\phi\in \cS_{n+1}\setminus \cS_n$. Since $\cS_n$ acts transitively on the vertices of $\LO_n$, we can even assume that $\phi(\mathbf 1)=\mathbf 1$. By Lemma \ref{tilde} we pass to $\tilde \LO_n$ and get an automorphism if $\tilde\LO_n$ that permutes the unit vectors, equivalently, the coordinates,  in $\RR^{\binom{n}{2}}$.
	
Now $\tilde\LO_n$ can be identified with $P'$ above after replacing  $n$ by $n-1$. Our assumption on $\phi$ is $\phi(n)\neq n$. Since $n\ge 4$, we find $i,j$ with $1\le i<j < n$ and $\phi(i),\phi(j)< n$. Relabeling by $\phi$ does not transform $\tilde w_{ij}$ into another coordinate function $\tilde w_{uv}$. This is a contradiction.
\end{proof}

One can use Normaliz to explore further properties of the polytopes $\LO_n$ for $n\le 7$. The volumes of these polytopes can be computed by the descent algorithm with the exploitation of isomorphism types of faces. For $n\le 6$ the Ehrhart series is computable as well.  It is not known whether all linear order polytopes are normal in the sense of \cite{BrGu}. Normaliz confirms normality rather quickly for $n\le 6$. A brute force application of Normaliz in \cite{Rudy} has verified it for $n=7$ as well. One of the next releases of Normaliz will exploit the action of the automorphism group for this computation and accelerate it. 

The facets of $\LO_n$ are known only for $n\le 7$ (for $n=8$ one has a lower bound of their number and of the number of orbits). Normaliz can determine them, including their orbits under the actions of $\ZZ_2\times \cS_{n+1}$ and $\ZZ_2\times\cS_n$. For $n=7$ the computation confirms (and is confirmed by) Tables 6.1 and 6.2 in \cite{MaRe}.  The $\ZZ_2\times \cS_{n+1}$-orbits are called ``$P_{\LO}^n $-classes'' in \cite{MaRe}. Table 6.1 shows the $19$  $\ZZ_2\times\cS_n$-orbits for $n=7$, of which $8$ decompose into $2$  $\cS_n$-orbits.

\section{Acknowledgment}

We are grateful to Takayuki Hibi with whom the first ideas towards this note were discussed.

Our thanks go to Lukas Katthän for providing the reference \cite{Fio} and for critically reading this note.

The use of nauty in Normaliz would not have been possible without an intensive consultation of Brendan McKay.

\end{document}